\title{$k$-nets embedded in a projective plane over a field}
\date{}
\author{G.~Korchm\'aros\thanks{the research was performed while the first author was a visiting professor at the Bolyai Institute of University of Szeged during the second semester of the academic year 2011-12. The visit was financially supported by the TAMOP-4.2.1/B-09/1/KONV-2010-0005 project,}\,\, G.~P.~Nagy\thanks{the research was supported by the European Union and co-funded by the European Social Fund; project number  TAMOP-4.2.2.A-11/1/KONV-2012-0073}\,\,
and N.~Pace\thanks{the research was supported by FAPESP (Funda\c{c}\~ao de Amparo a Pesquisa do Estado de S\~{a}o Paulo), proc. no. 12/03526-0.}.}
\newtheorem{theorem}{Theorem}[section]
\newtheorem{proposition}[theorem]{Proposition}
\newtheorem{lemma}[theorem]{Lemma}
\theoremstyle{definition}
\newtheorem*{definition*}{Definition}
\newtheorem{example}[theorem]{Example}
\newtheorem*{proposition*}{Proposition}
\newtheorem*{corollary*}{Corollary}
\newtheorem*{lemma*}{Lemma}
\def\cH{\mathcal H}
\def\cR{\mathcal R}
\def\cT{\mathcal T}
\def\cW{\mathcal W}
\begin{document}
\maketitle
\begin{abstract}
We investigate $k$-nets with $k\geq 4$ embedded in the projective plane $PG(2,\mathbb{K})$ defined over a field $\mathbb{K}$; they are line configurations in $PG(2,\mathbb{K})$ consisting of $k$ pairwise disjoint line-sets, called components, such that any two lines from distinct families are concurrent with exactly one line from each component. The size of each component of a $k$-net is the same,  the order of the $k$-net. If $\mathbb{K}$ has zero characteristic, no embedded $k$-net for $k\geq 5$ exists; see \cite{sj2004,ys2007}. Here we prove that this holds true in positive characteristic $p$ as long as $p$ is sufficiently large compared with the order of the $k$-net. Our approach, different from that used in \cite{sj2004,ys2007}, also provides a new proof in characteristic zero.
\end{abstract}

\section{Introduction}
\label{problem}
An (abstract) $k$-net is a point-line incidence structure whose lines are partitioned in $k$ subsets, called components, such that any two lines from distinct components are concurrent with exactly one line from each component. The components have the same size, called the order of the $k$-net and denoted by $n$. A $k$-net has $n^2$ points and $kn$ lines. A $k$-net (embedded) in $PG(2,\mathbb{K})$ is a subset of points and lines such that the incidence structure induced by them is a $k$-net.

In the complex plane, there are known plenty of examples and even infinity families of $3$-nets but only one $4$-net up to projectivity; see \cite{sj2004,urzua2009,ys2004,ys2007}. This $4$-net, called the classical $4$-net, has order $3$ and it exists since $PG(2,\mathbb{C})$ contains an affine subplane $AG(2,\mathbb{F}_3)$ of order $3$, unique up to projectivity, and the four parallel line classes of $AG(2,\mathbb{F}_3)$ are the components of a $4$-net in $PG(2,\mathbb{C})$. By a result of Stipins \cite{sj2004}, see also \cite{ys2007}, no $k$-net with $k\geq 5$ exists in $PG(2,\mathbb{C})$. Since Stipins' proof works over any algebraically closed field of characteristic zero, his result holds true in $PG(2,\mathbb{K})$ provided that $\mathbb{K}$ has zero characteristic.

Our present investigation of $k$-nets in $PG(2,\mathbb{K})$ includes groundfields $\mathbb{K}$ of positive characteristic $p$, and as a matter of fact, many more examples. This phenomena is not unexpected  since $PG(2,\mathbb{K})$ with $\mathbb{K}$ of characteristic $p>0$ contains an affine subplane $AG(2,\mathbb{F}_p)$ of order $p$ from which $k$-nets for $3\leq k \leq p+1$ arise taking $k$ parallel line classes as components. Similarly, if $PG(2,\mathbb{K})$ also contains an affine subplane  $AG(2,\mathbb{F}_{p^h})$, in particular if $\mathbb{K}=\mathbb{F}_q$ with $q=p^r$ and $h|r$, then $k$-nets of order $p^h$ for $3\le k \leq p^h+1$ exist in $PG(2,\mathbb{K})$. Actually, more families of $k$-nets in $PG(2,\mathbb{F}_q)$ when $q=p^r$ with $r\geq 3$ exist; see Example \ref{exam:Lun}.
On the other hand, no $5$-net of order $n$ with $p>n$ is known to exist. This suggests that for sufficiently large $p$ compared with $n$, Stipins' theorem remains valid in $PG(2,\mathbb{K})$. Our Theorem \ref{fotetel} proves it for $p>3^{\varphi(n^2-n)}$ where $\varphi$ is the classical Euler $\varphi$ function, and in particular for $p>3^{n^2/2}$. Our approach
also works in zero characteristic and provides a new proof for Stipins' result.

A key idea in our proof is to consider the cross-ratio of four concurrent lines from different components of a $4$-net. Proposition \ref{pr:constant} states that the cross-ratio remains constant when the four lines vary without changing component. In other words,  every $4$-net in $PG(2,\mathbb{K})$ has constant cross-ratio. By Theorem \ref{thenov10ter} in zero charactersitic, and by Theorem \ref{th10nov} in characteristic $p$ with $p>3^{\varphi(n^2-n)}$, the constant cross-ratio is restricted to two values only, namely to the roots of the polynomial $X^2-X+1$. From this,  the non-existence of $k$-nets for $k\ge 5$ easily follows both in zero characteristic and in characteristic $p$ with $p>3^{\varphi(n^2-n)}$. It should be noted that without a suitable hypothesis on $n$ with respect to $p$,  the constant cross-ratio of a $4$-net may assume many different values, even for finite fields, see Example \ref{exam:Lun}.

In $PG(2,\mathbb{K})$, $k$-nets naturally arise from pencils of curves, the components of the $k$-net being the completely reducible curves in the pencil. This has given a motivation for the study of $k$-nets in Algebraic geometry; see \cite{Dolgachev}, and \cite{ys2004}. We discuss this relationship in Section \ref{alggeo} and state an equation that will be useful in Section \ref{inv}.

\section{$k$-nets and completely irreducible curves in a pencil of curves}
\label{alggeo}
Let $\lambda_1,\lambda_2,\lambda_3$ be three components of a $k$-net of order $n$ embedded in $PG(2,\mathbb{K})$. Let $r_i=0$, $w_i=0$, $t_i=0$ ($i=1,\ldots,n$) be the equations of the lines in  $\lambda_1,\lambda_2,\lambda_3$, respectively. The completely reducible polynomials $R=r_1\cdots r_n$, $W=w_1\cdots w_n$ and $T=t_1\cdots t_n$ define three plane curves of degree $n$, say $\cR$, $\cW$ and $\cT$. Consider the pencil $\Lambda$ generated by $\cR$ and $\cW$. Since $\lambda_1,\lambda_2,\lambda_3$ are the components of a $3$-net of order $n$, there exist $\alpha,\beta\in \mathbb{K}^*$ such that $\cT$ and the curve $\cH$ of $\Lambda$ with equation $\alpha R+ \beta W=0$ have $n^2+1$ common points but no common components. From B\'ezout's theorem, $\cT=\cH$. Therefore,
\begin{equation}
\label{7geneqq1}
 \alpha r_1\cdots r_n + \beta w_1\cdots w_n + \gamma t_1\cdots t_n=0
 \end{equation}
holds for a homogeneous triple  $(\alpha,\beta,\gamma)$ with coordinates $\mathbb{K}^*$. Changing the projective coordinate system in $PG(2,\mathbb{K})$ the equations of the lines in the components of the $3$-net change but the homogeneous triple $(\alpha,\beta,\gamma)$ remains invariant.

Conversely, assume that an irreducible pencil $\Lambda$ of plane curves of degree $n$ contains $k$ curves each splitting into $n$ distinct lines, that is, $k$ completely reducible curves. Let $\lambda_i$ with $1\le i \le k$ be the set of the $n$ lines which are the factors of a completely reducible curve. Then $\lambda_1,\lambda_2,\ldots,\lambda_k$ are the components of a $k$-net embedded in $PG(2,\mathbb{K})$.

\section{The invariance of the cross-ratio of a $4$-net}
\label{inv}
Consider a $4$-net of order $n$ embedded in $PG(2,\mathbb{K})$ and label their components with $\lambda_i$ for $i=1,2,3,4$. We say that the $4$-net $\lambda=(\lambda_1,\lambda_2,\lambda_3,\lambda_4)$ has {\em{constant cross-ratio}} if for every point $P$ of $\lambda$ the cross-ratio $(\ell_1,\ell_2,\ell_3,\ell_4)$ of the four lines $\ell_i\in\lambda_i$ through $P$ is constant.

\begin{proposition} \label{pr:constant}
Every $4$-net in $PG(2,\mathbb{K})$ has constant cross-ratio.
\end{proposition}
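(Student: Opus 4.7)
The plan is to exploit the polynomial identity \eqref{7geneqq1} from Section~\ref{alggeo}, applied twice, once to the triple of components $(\lambda_1,\lambda_2,\lambda_3)$ and once to $(\lambda_1,\lambda_2,\lambda_4)$. For a $4$-net $(\lambda_1,\lambda_2,\lambda_3,\lambda_4)$ with line equations $r_1,\ldots,r_n$, $w_1,\ldots,w_n$, $t_1,\ldots,t_n$, $s_1,\ldots,s_n$, this yields two relations
\[
\alpha\, r_1\cdots r_n + \beta\, w_1\cdots w_n + \gamma\, t_1\cdots t_n = 0, \qquad
\alpha'\, r_1\cdots r_n + \beta'\, w_1\cdots w_n + \delta'\, s_1\cdots s_n = 0,
\]
with $\alpha,\beta,\gamma,\alpha',\beta',\delta'\in\mathbb{K}^*$ determined (up to rescaling) by the $4$-net and independent of any particular point.

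Next I would fix an arbitrary point $P$ of the net and let $\ell_1=\{r_i=0\}$, $\ell_2=\{w_j=0\}$, $\ell_3=\{t_k=0\}$, $\ell_4=\{s_l=0\}$ be the four concurrent lines through $P$, one from each component. Write $R=r_i\tilde R$, $W=w_j\tilde W$, $T=t_k\tilde T$, $S=s_l\tilde S$, where $\tilde R,\tilde W,\tilde T,\tilde S$ are the products of the remaining $n-1$ lines in each component. By the defining property of a $k$-net no other line through $P$ can belong to the same component as $\ell_i$, so $\tilde R(P),\tilde W(P),\tilde T(P),\tilde S(P)$ are all nonzero scalars. Taking the first-order term of the Taylor expansion of each polynomial identity at $P$ (all summands vanish to order one there), the non-vanishing factors $\tilde R(P)$, $\tilde W(P)$, $\tilde T(P)$, $\tilde S(P)$ specialize to scalars and I obtain two linear identities among the linear forms $r_i,w_j,t_k,s_l$:
\[
\alpha\tilde R(P)\,r_i + \beta\tilde W(P)\,w_j + \gamma\tilde T(P)\,t_k = 0,\qquad
\alpha'\tilde R(P)\,r_i + \beta'\tilde W(P)\,w_j + \delta'\tilde S(P)\,s_l = 0.
\]

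Solving these for $t_k$ and $s_l$ as linear combinations of $r_i$ and $w_j$ and then computing the cross-ratio $(\ell_1,\ell_2;\ell_3,\ell_4)$ from the resulting coefficients, the factors $\tilde T(P)$ and $\tilde S(P)$ each occur once in numerator and once in denominator; likewise $\tilde R(P)$ appears once in numerator and once in denominator, and similarly for $\tilde W(P)$. All point-dependent scalars cancel and only the ratio
\[
(\ell_1,\ell_2;\ell_3,\ell_4) \;=\; \frac{\alpha\,\beta'}{\beta\,\alpha'}
\]
(up to a choice of cross-ratio convention) survives, depending solely on the pencil coefficients of the $4$-net, not on $P$.

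The main obstacle is bookkeeping rather than conceptual: one must verify carefully that $\tilde R(P),\tilde W(P),\tilde T(P),\tilde S(P)$ really are nonzero (this is exactly the $k$-net axiom, that the line in a given component through $P$ is unique) and that, in passing from the polynomial identity to a linear identity on the tangent space at $P$, no higher-order terms interfere. Once this is set up, the cancellation of the $P$-dependent factors in the cross-ratio computation is a direct algebraic check, and independence of the choice of the third and fourth components used in the two relations is built into the invariance of $(\alpha,\beta,\gamma)$ and $(\alpha',\beta',\delta')$ noted after \eqref{7geneqq1}.
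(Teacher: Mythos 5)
Your proposal is correct and is essentially the paper's own argument: the authors likewise apply the pencil identity \eqref{7geneqq1} to $(\lambda_1,\lambda_2,\lambda_3)$ and $(\lambda_1,\lambda_2,\lambda_4)$, extract the degree-one part at $P$ (after normalizing coordinates so that $P$ is the origin, which is exactly your Taylor-expansion step, with their $\rho,\omega,\tau,\sigma$ playing the role of your $\tilde R(P),\tilde W(P),\tilde T(P),\tilde S(P)$), and read off the cross-ratio $\kappa=\alpha\beta'/(\alpha'\beta)$ from the resulting linear relations. The bookkeeping points you flag (nonvanishing of the residual factors at $P$ via the net axiom, and cancellation of all $P$-dependent scalars) are handled in the paper exactly as you describe.
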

\begin{proof}
In a projective reference system, let $r_i=0$, $w_i=0$, $t_i=0$, $s_i=0$ with $1\le i \le n$ be the lines of a $4$-net $\lambda=(\lambda_1,\lambda_2,\lambda_3,\lambda_4)$ respectively. Then there exist $\alpha,\beta,\gamma\in \mathbb{K}^*$ such that (\ref{7geneqq1}) holds
and $\alpha',\beta',\gamma'\in \mathbb{K}$ such that
\begin{equation} \label{7geneqq2}
\alpha' r_1r_2\cdots r_n+\beta' w_1w_2\cdots w_n+\gamma' s_1s_2\cdots s_n=0.
\end{equation}
As observed in Section \ref{alggeo}, the coefficients $\alpha,\beta,\gamma,\alpha',\beta',\gamma'$ remain invariant when the reference system is changed. Take a point $P$ of $\lambda$ and relabel the lines of $\lambda$ such that $r_1=0$, $w_1=0$, $t_1=0$ and $s_1=0$ are the four lines of $\lambda$ passing through $P$. We temporarily introduce the notation $(x_1, x_2, x_3)$ for the homogeneous coordinates of a point, and we arrange the reference system in such a way that $P$ coincides with the point $(0,0,1)$, the line $x_3=0$ contains no point from $\lambda_1$ or $\lambda_2$ while $r_1=x_1$ and $w_1=x_2$. Also, non-homogeneous coordinates $x=x_1/x_3$ and $y=x_2/x_3$ can be used so that $r_1=x$ and $w_1=y$. Note that we have arranged the coordinates so that $r_i,w_i,t_i,s_i$ have a zero constant term if and only if $i=1$. Let
\[\rho=\prod_{i=2}^n r_i(0,0),\hspace{5mm}
\omega=\prod_{i=2}^n w_i(0,0),\hspace{5mm}
\tau=\prod_{i=2}^n t_i(0,0),\hspace{5mm}
\sigma=\prod_{i=2}^n s_i(0,0).\]
Observe that
\[0=\alpha r_1\cdots r_n+\beta w_1\cdots w_n+\gamma t_1\cdots t_n=\alpha \rho x+ \beta \omega y+\gamma \tau t_1+ [\cdots],\]
where $[\ldots]$ stands for the sum of terms of degree at least $2$. From (\ref{7geneqq1}),
\[\frac{\alpha \rho}{\gamma\tau} x+ \frac{\beta \omega}{\gamma\tau} y+t_1=0.\]
Similarly,
\[\frac{\alpha' \rho}{\gamma'\sigma} x+ \frac{\beta' \omega}{\gamma'\sigma} y+s_1=0.\]
Therefore, the cross-ratio of the lines of $\lambda$ passing through $P$ is equal to
\begin{equation}
\label{7geneq3}
\kappa=\frac{\alpha \beta'}{\alpha' \beta}
\end{equation}
and hence it is independent of the choice of the point $P$.
\end{proof}
As an illustration of Proposition \ref{pr:constant} we compute the constant cross-ratio of the known $4$-net embedded in the complex plane.
\begin{example} Let $n=3$, and take a primitive third root of unity $\xi$. In homogeneous coordinates $(x,y,z)$ of $PG(2,\mathbb{K})$, let
\begin{align*}
r_1 &:=x,& r_2 &:=y, & r_3 &:=z,\\
w_1 &:=x+y+z, & w_2 &:=x+\xi y+\xi^2 z,& w_3 &:=x+\xi^2 y+\xi z,\\
t_1 &:=\xi x+y+z, & t_2 &:=x+\xi y+z, & t_3 &:=x+y+\xi z, \\
s_1 &:=\xi^2 x+y+z,& s_2 &:=x+\xi^2 y+z,& s_3 &:=x+y+\xi^2 z.
\end{align*}
Then these lines form a $4$-net $\lambda$ order $3$. Moreover,
\begin{align*}
t_1t_2t_3&=3(2\xi+1)r_1r_2r_3+\xi w_1w_2w_3,\\
s_1s_2s_3&=-3(2\xi+1)r_1r_2r_3+\xi^2 w_1w_2w_3.
\end{align*}
%\[\alpha=-3(2\xi+1),\, \beta=-\xi,\, \alpha'=3(2\xi +1),\, \beta'=-\xi^2.\]
Hence, the constant cross-ratio of $\lambda$ is $\kappa=-1/\xi$.
\end{example}

\section{Some constraints on the constant cross-ratio of a $4$-net} It is well known that the cross-ratio of four distinct concurrent lines can take six possible different values depending on the order in which the lines are given. If $\kappa$ is one of them then $\kappa\neq 0,1$ and these six cross-ratios are
$$\kappa,\quad \frac{1}{\kappa},\quad 1-\kappa,\quad \frac{1}{1-\kappa},\quad \frac{\kappa}{\kappa-1},\quad 1-\frac{1}{\kappa}.$$
It may happen, however, that some of these values coincide, and this is the case if and only if either $\kappa \in \{-1,1/2,2\}$, or
\begin{equation}
\label{eq9nov}
\kappa^2-\kappa+1=0.
\end{equation}
Proposition \ref{pr:constant} says that the cross-ratio of four concurrent lines of a $4$-net takes the above six values for a given $\kappa\neq 0,1$, and each of these values can be considered as the \emph{constant cross-ratio} of the $4$-net. Now, the problem consists in computing $\kappa$.
We are able to do it in zero characteristic showing that $\kappa$ satisfies Equation (\ref{eq9nov}). In positive characteristic there are more possibilities. This will be discussed after proving the following result.
\begin{proposition} \label{pr:k_id}
Let $\lambda$ be a $4$-net of order $n$ embedded in $PG(2,\mathbb{K})$.  Then the cross-ratio $\kappa$ of $\lambda$ is an $N$--th root of unity of $\mathbb{K}$ such that $N=n(n-1)$ and
\begin{equation}
\label{eq9novbis}
(\kappa-1)^{N}=1.
\end{equation}
\end{proposition}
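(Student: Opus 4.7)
The plan is to compute, in two different ways, the product $\Pi = \prod_P \mathrm{cr}(P)$ over the $n^{2}$ net points of the cross-ratio of the four concurrent lines at $P$, taken in the order $(r_i, w_j; t_k, s_m)$. By Proposition~\ref{pr:constant} this cross-ratio is the constant $\kappa$ independently of $P$, so on the one hand $\Pi = \kappa^{n^{2}}$.

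For the second evaluation, fix a generic line $\ell\subset PG(2,\mathbb{K})$ containing no net point, and set $A_i = r_i\cap\ell$, $B_j = w_j\cap\ell$, $C_k = t_k\cap\ell$, $D_m = s_m\cap\ell$. By projective invariance, $\mathrm{cr}(P)$ equals the cross-ratio of the four projected points on $\ell$. Since two lines from distinct components of a $4$-net concur at a unique net point, as $P$ runs over the net each of the index pairs $(i,k),(i,m),(j,k),(j,m)$ covers $\{1,\dots,n\}^{2}$ bijectively. Letting $F,G,H,L$ be the monic polynomials of degree $n$ on $\ell$ with root sets $\{A_i\}, \{B_j\}, \{C_k\}, \{D_m\}$, the expansion of the cross-ratio formula and the bijections rearrange the product into
\[
\Pi = \frac{\prod_i L(A_i)\cdot \prod_j H(B_j)}{\prod_i H(A_i)\cdot \prod_j L(B_j)}.
\]

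The key step is to restrict the pencil identities~(\ref{7geneqq1}) and (\ref{7geneqq2}) to $\ell$: they become polynomial identities $\alpha\rho F + \beta\omega G + \gamma\tau H = 0$ and $\alpha'\rho F + \beta'\omega G + \gamma'\sigma L = 0$, where $\rho,\omega,\tau,\sigma$ are the leading coefficients of $R|_\ell, W|_\ell, T|_\ell, S|_\ell$. Evaluating at a root of $F$ gives $H(A_i) = -\tfrac{\beta\omega}{\gamma\tau}G(A_i)$ and $L(A_i) = -\tfrac{\beta'\omega}{\gamma'\sigma}G(A_i)$; evaluating at a root of $G$ expresses $H(B_j),L(B_j)$ similarly in terms of $F(B_j)$. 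Substituting into $\Pi$, the cross-terms $\prod_i G(A_i)$ and $\prod_j F(B_j)$—equal up to a sign, both being $\mathrm{Res}(F,G)$—cancel, and $\Pi$ collapses to $\bigl(\alpha\beta'/(\alpha'\beta)\bigr)^{n} = \kappa^{n}$. Matching with the first evaluation gives $\kappa^{n^{2}} = \kappa^{n}$, i.e.\ $\kappa^{n(n-1)} = 1$.

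For $(\kappa-1)^{n(n-1)} = 1$, I would repeat the two-evaluation procedure using the cross-ratio in the order $(r_i,t_k;w_j,s_m)$; by the analogous local computation this value at each net point equals $\kappa/(\kappa-1)$. The rearrangement now produces $\Pi = \tfrac{\prod_i L(A_i)\cdot \prod_k G(C_k)}{\prod_i G(A_i)\cdot \prod_k L(C_k)}$, and the decisive new ingredient is the identity
\[
L(C_k) = \tfrac{\rho\alpha'(\kappa-1)}{\gamma'\sigma}F(C_k),
\]
obtained by combining the two pencil identities at a root of $H$. The same cancellation then reduces $\Pi$ to $(\kappa/(\kappa-1))^{n}$, hence $(\kappa/(\kappa-1))^{n(n-1)} = 1$; combined with $\kappa^{n(n-1)}=1$ this gives $(\kappa-1)^{n(n-1)} = 1$. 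The main technical hurdle is the careful bookkeeping of the $4$-net bijection between index pairs from distinct components and net points, together with the leading-coefficient constants and signs that appear when restricting the pencil identities from $PG(2,\mathbb{K})$ to $\ell$.
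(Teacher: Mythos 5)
Your argument is correct and is essentially the paper's own proof in different clothing: the paper likewise evaluates the product of the constant cross-ratio over the $n^2$ points of the net in two ways, writing the line of $\lambda_3$ (resp.\ $\lambda_4$) through $P_{ij}=r_i\cap w_j$ as $\sigma_{ij}r_i+\tau_{ij}w_j$ (resp.\ $\delta_{ij}r_i+\omega_{ij}w_j$) and extracting $\prod_j\tau_{ij}=-\beta/\gamma$, $\prod_i\sigma_{ij}=-\alpha/\gamma$ from the pencil identity~(\ref{7geneqq1}) --- which is exactly your resultant cancellation carried out without the auxiliary transversal $\ell$. For $(\kappa-1)^{N}=1$ the paper short-circuits your second computation by noting that each of the six permuted cross-ratio values is the constant cross-ratio of the same $4$-net with its components reordered, hence also an $N$-th root of unity; this is the same idea that underlies your rerun of the argument in the order $(r_i,t_k;w_j,s_m)$.
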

\begin{proof}
We prove first that $\kappa^{N}=1$. Let $P_{ij}$ be the common point of the lines $r_i$ and $w_j$ with $1\le i,j \leq n$. Then the unique line from $\lambda_3$ through $P_{ij}$ has equation $\sigma_{ij}r_i+\tau_{ij}w_j$ with $\sigma_{ij},\tau_{ij}\in \mathbb{K}^*$. Moreover, for any $k=1,\ldots,n$ there is a unique index $j$ such that $t_k= \sigma_{ij}r_i+\tau_{ij}w_j$. For every $i=1,\ldots,n$,
\begin{equation} \label{eqq3}
\alpha r_1\cdots r_n+\beta w_1\cdots w_n+\gamma[(\sigma_{i1}r_i+\tau_{i1}w_1)\cdots (\sigma_{in}r_i+\tau_{in}w_n)]=0.
\end{equation}
Take a point $Q$ on the line $r_i=0$ such that $w_j(Q)\neq 0$ for every $1\le j \le n$. Then
\[w_1(Q)\cdots w_n(Q)(\beta+\gamma \prod_{j=1}^n\tau_{ij})=0\]
yields
\begin{equation} \label{eqq4}
-\frac{\beta}{\gamma}= \prod_{j=1}^n\tau_{ij}
\end{equation}
for any fixed index $i$. The above argument applies to any line $w_j$ and gives
\begin{equation} \label{eqq5}
-\frac{\alpha}{\gamma}= \prod_{i=1}^n\sigma_{ij}
\end{equation}
for any fixed index $j$. Therefore,
\begin{equation} \label{eqq6}
\left(\frac{\beta}{\alpha}\right)^n= \prod_{i=1}^n\prod_{j=1}^n \frac{\tau_{ij}}{\sigma_{ij}}.
\end{equation}
A similar argument can be carried out for $\lambda_4$. The unique line from $\lambda_4$ through $P_{ij}$ has equation $\delta_{ij}r_i+\omega_{ij}w_j$ with $\delta_{ij},\omega_{ij}\in \mathbb{K}^*$. Then
\begin{equation} \label{eqq7}
\left(\frac{\beta'}{\alpha'}\right)^n= \prod_{i=1}^n\prod_{j=1}^n \frac{\omega_{ij}}{\delta_{ij}}.
\end{equation}
{}From Lemma \ref{pr:constant},
\[\frac{\tau_{ij}}{\sigma_{ij}}\cdot \frac{\delta_{ij}}{\omega_{ij}}=\kappa\]
for every $1\le i,j\le n$.
Then Equations (\ref{eqq6}) and (\ref{eqq7}) yield $\kappa^n=\kappa^{n^2}$ whence
\begin{equation}
\label{eqnov10}
\kappa^N=1.
\end{equation}
{}From the discussion at the beginning of this section, Equation (\ref{eqnov10}) holds true when $\kappa$ is replaced with any of the other five cross-ratio values. Therefore,  (\ref{eq9novbis}) also holds.
\end{proof}
In the complex plane, the cross-ratio equation has only two solutions, namely the roots of (\ref{eq9nov}). In fact, let $\kappa=x+yi$ with $x,y\in \mathbb{R}$. Then with respect to the complex norm,   (\ref{eqnov10}) and (\ref{eq9novbis}) imply $|x+iy|=x^2+y^2=1$ and $|x-1+iy|=(x-1)^2+y^2=1$. It hence follows that $\kappa=\frac{1}{2}(1\pm\sqrt{3}i)$, or equivalently (\ref{eq9nov}).

To extend this result to any field of characteristic zero, and discuss the positive characteristic case, look at
$$f(X)=\frac{X^N-1}{X-1}\,\,{\mbox{and}}\,\, g(X)=\frac{(X-1)^N-1}{X}$$ as polynomials in $\mathbb{Z}[X]$. From the preceding discussion on the complex case, their maximum common divisor is either $X^2-X+1$, or $1$ according as $6$ divides $N$ or does not. In the former case, divide both by $X^2-X+1$ and then replace $f(X)$ and $g(X)$ by them accordingly. Now, $f(X)$ and $g(X)$ are coprime, and hence their resultant is a non-zero integer $R$. Using a basic formula on resultants, see \cite[Lemma 2.3]{hkt}, $R$ may be computed in terms of a primitive $N$-th root of unity $\xi$, namely
$$ R=\prod_{1\leq i,j\leq N-1}(1+\xi^i-\xi^j),\, {\mbox{when}}\, 6\nmid N,$$ and
$$R=\prod_{\substack{1\leq i,j\leq N-1\\ i,j\neq N/6,\, 5N/6}} (1+\xi^i-\xi^j),\, {\mbox{when}}\, 6\mid N,$$
hold in the $N$-th cyclotomic field $\mathbb{Q}(\xi)$. Therefore, $R\neq 0$ provided that $\mathbb{K}$ has zero characteristic.
\begin{theorem}
\label{thenov10ter} Let $\mathbb{K}$ be a field of characteristic $0$. If a $4$-net $\lambda$ is embedded in $PG(2,\mathbb{K})$ then  $-3$ is a square in $\mathbb{K}$ and the constant cross-ratio $\kappa$ of $\lambda$ satisfies (\ref{eq9nov}).
\end{theorem}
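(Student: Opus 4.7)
The plan is to combine Proposition \ref{pr:k_id} with the resultant machinery sketched just before the theorem statement. Proposition \ref{pr:k_id} gives $\kappa^N = 1$ and $(\kappa-1)^N = 1$ for $N = n(n-1)$, and since $\kappa \neq 0, 1$, the element $\kappa$ is a common root in $\mathbb{K}$ of the polynomials
\[
f(X) = \frac{X^N - 1}{X-1}, \qquad g(X) = \frac{(X-1)^N - 1}{X}
\]
viewed as elements of $\mathbb{Z}[X] \subset \mathbb{K}[X]$.

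I would then split into cases according to whether $6 \mid N$. If $6 \nmid N$, the polynomials $f$ and $g$ are coprime in $\mathbb{Z}[X]$ by the discussion in the excerpt, and their resultant $R \in \mathbb{Z}$ is given by the first product formula. Each factor $1 + \xi^i - \xi^j$ (with $\xi$ a primitive $N$-th root of unity) is nonzero, because $\xi^j - \xi^i = 1$ together with $|\xi^i| = |\xi^j| = 1$ forces $\xi^i$ and $\xi^j$ to be primitive sixth roots of unity, which is incompatible with $6 \nmid N$. Hence $R \neq 0$ in $\mathbb{Z}$, and since $\mathbb{K}$ has characteristic zero, $R \neq 0$ in $\mathbb{K}$ too. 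But the existence of a common root of $f$ and $g$ in $\mathbb{K}$ would force the resultant to vanish there, so this case is impossible.

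In the remaining case $6 \mid N$, the gcd of $f$ and $g$ in $\mathbb{Z}[X]$ equals $X^2 - X + 1$; write $f = (X^2-X+1)\tilde f$ and $g = (X^2-X+1)\tilde g$. Now $\tilde f$ and $\tilde g$ are coprime, and the same resultant argument, via the second product formula, where the two indices corresponding to the primitive sixth roots of unity are removed so that every remaining factor $1 + \xi^i - \xi^j$ is nonzero, shows that $\kappa$ cannot be a common root of $\tilde f$ and $\tilde g$. Therefore $\kappa^2 - \kappa + 1 = 0$, which is (\ref{eq9nov}). The roots of this quadratic are $(1 \pm \sqrt{-3})/2$, so the existence of $\kappa$ in $\mathbb{K}$ forces $-3$ to be a square in $\mathbb{K}$.

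The main obstacle I expect is verifying that the products defining $R$ are genuinely nonzero for the indicated ranges of indices. This amounts to an elementary computation on the unit circle: from $\xi^j - \xi^i = 1$ with $|\xi^i|=|\xi^j|=1$ one deduces that $\xi^i$ and $\xi^j$ must themselves be primitive sixth roots of unity, which is compatible with $\xi$ being a primitive $N$-th root only when $6 \mid N$ and $(i,j) = (N/6,\, 5N/6)$, precisely the indices excluded in the second formula. Once this is settled, the resultant is a nonzero integer and the contradictions above go through uniformly, completing the reduction to $\kappa^2 - \kappa + 1 = 0$.
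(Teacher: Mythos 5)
Your proposal is correct and follows essentially the same route as the paper, which proves this theorem via the discussion immediately preceding it: Proposition \ref{pr:k_id} makes $\kappa$ a common root of $f$ and $g$, coprimality after removing the gcd $X^2-X+1$ forces the resultant to be a nonzero integer, and characteristic zero then forces $\kappa^2-\kappa+1=0$, whence $-3$ is a square. (Only a minor slip: in $1+\xi^i-\xi^j=0$ one gets $\xi^j$ a primitive sixth root and $\xi^i$ a primitive \emph{cube} root of unity, not both primitive sixth roots; this does not affect the argument, and in characteristic zero the nonvanishing of the resultant already follows from coprimality alone.)
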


To investigate the positive characteristic case,
we will use the well known result that $\mathbb{Q}(\xi)$ is a cyclic Galois extension of $\mathbb{Q}$ of degree $\varphi(N)$ where $\varphi$ is the classical Euler function. Let $\alpha$ be a generator of the Galois group. Then $\alpha(\xi)=\xi^m$ for a positive integer $m$ prime to $N$. Therefore, $\alpha$ permutes the factors in the right hand side. Given such a factor $1+\xi^i-\xi^j$, its cyclotomic
norm $$\parallel 1+\xi^i-\xi^j\parallel=(1+\xi^i-\xi^j)\cdot (1+\xi^i-\xi^j)^\alpha\cdot \ldots \cdot (1+\xi^i-\xi^j)^{\alpha^{N-1}}$$ is in $\mathbb{Q}$. Actually, it is an integer since the factors  are algebraic integers. Hence, the prime divisors of $R$ come from the prime divisors of the norms $\parallel 1+\xi^i-\xi^j\parallel$. Therefore, to find an upper bound on the largest prime divisor of  $R$ it is enough to find an upper bound on these norms. Obviously,

$$|\parallel 1+\xi^i-\xi^j\parallel|\leq |1+\xi^i-\xi^j|\cdot |1+\xi^{im}-\xi^{jm}|\cdot \ldots \cdot |1+\xi^{i(\varphi(N)-1))}-\xi^{j(\varphi(N)-1))}|.$$
Since $|1-\xi^i+\xi^j|\leq 3$, this shows that $|\parallel 1+\xi^i-\xi^j\parallel|\leq 3^{\varphi(N)}$. Hence the largest prime divisor of $R$ is at most $3^{\varphi(N)}$. Therefore, the following  result is proven.
\begin{theorem}
\label{th10nov} Let $\mathbb{K}$ be a field of characteristic $p>0$. If $p>3^{\varphi(n^2-n)}$  then Theorem \ref{thenov10ter} holds.
\end{theorem}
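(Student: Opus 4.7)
The plan is to transfer the characteristic-zero resultant argument just built up for Theorem~\ref{thenov10ter} into positive characteristic by reducing a Bezout identity modulo $p$. By Proposition~\ref{pr:k_id}, if a $4$-net of order $n$ is embedded in $PG(2,\mathbb{K})$ then its constant cross-ratio $\kappa$ satisfies $\kappa^N=1$ and $(\kappa-1)^N=1$ with $N=n(n-1)$; since $\kappa\ne 0,1$, this is equivalent to $f(\kappa)=g(\kappa)=0$ for the polynomials $f(X)=(X^N-1)/(X-1)$ and $g(X)=((X-1)^N-1)/X$ introduced just before the statement.

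Next I would split on whether $6\mid N$. If $6\nmid N$, then $f$ and $g$ are already coprime in $\mathbb{Q}[X]$, so via the adjugate of the Sylvester matrix one has an identity $A(X)f(X)+B(X)g(X)=R$ with $A,B\in\mathbb{Z}[X]$ and $R\in\mathbb{Z}$ equal to the resultant $\mathrm{Res}(f,g)$. Specializing at $X=\kappa$ shows $R\equiv 0\pmod p$. If $6\mid N$, replace $f,g$ by the quotients $\tilde f,\tilde g$ obtained by dividing out the common factor $X^2-X+1$; the analogous Bezout identity for the now coprime pair $\tilde f,\tilde g$ forces either $\kappa^2-\kappa+1=0$, which is exactly equation (\ref{eq9nov}), or $p\mid \tilde R=\mathrm{Res}(\tilde f,\tilde g)$.

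In the third step I invoke the estimate established in the paragraphs preceding the statement: by the cyclotomic-norm computation, every prime divisor of the relevant resultant is bounded by $3^{\varphi(N)}$. Since $p>3^{\varphi(n^2-n)}=3^{\varphi(N)}$ by hypothesis, the divisibility conclusions of step two are impossible. In the case $6\nmid N$ this means that no $4$-net of order $n$ can exist in $PG(2,\mathbb{K})$, so the assertion holds vacuously; in the case $6\mid N$ we conclude that $\kappa$ is a root of $X^2-X+1$ in $\mathbb{K}$, and in particular the discriminant $-3$ is a square in $\mathbb{K}$, which reproduces the two conclusions of Theorem~\ref{thenov10ter}.

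The only delicate point I foresee is the passage from a Bezout identity in $\mathbb{Q}[X]$ to one in $\mathbb{Z}[X]$ with the resultant appearing on the right-hand side, so that specializing modulo $p$ is legitimate; this is a classical property of resultants (obtained from the adjugate of the Sylvester matrix and hence valid over any commutative ring), but it must be stated explicitly so that the reduction $\pmod p$ is unambiguous. Every other ingredient—the prime-divisor bound $3^{\varphi(N)}$ and the characterization of $\kappa$ as a simultaneous root of $f$ and $g$—has already been supplied in the excerpt.
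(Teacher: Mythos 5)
Your proposal is correct and follows essentially the same route as the paper: the paper's own argument is the resultant-and-cyclotomic-norm discussion preceding the statement, and you reproduce it faithfully, merely making explicit the final step (the integral Bezout identity $Af+Bg=R$ obtained from the Sylvester matrix, reduced modulo $p$ and evaluated at $\kappa$) that the paper leaves implicit when it says ``the following result is proven.'' Your handling of the two cases $6\mid N$ and $6\nmid N$, and the observation that the latter case makes the statement vacuous, match the paper's intent.
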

For planes over finite fields, Equations (\ref{eqnov10}) and (\ref{eq9novbis}) may provide further non-existence results on embedded $4$-nets.
\begin{theorem}
\label{thenov12} Let $\mathbb{K}=\mathbb{F}_q$ be a finite field of order $q=p^h$ with $p$ prime. If $p\neq 3$, then there exists no $4$-net of order $n$ embedded in $PG(2,\mathbb{F}_q)$ for $${\rm{gcd}}(n(n-1),q-1)\le 2.$$
\end{theorem}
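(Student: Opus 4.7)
The plan is to combine Proposition \ref{pr:k_id} with the structure of the multiplicative group $\mathbb{F}_q^*$. By Proposition \ref{pr:k_id}, the constant cross-ratio $\kappa$ of an embedded $4$-net of order $n$ over $\mathbb{F}_q$ satisfies
\[
\kappa^{N}=1 \quad\text{and}\quad (\kappa-1)^{N}=1,\qquad N=n(n-1).
\]
The first equation forces $\kappa$ to lie in the unique subgroup of $\mathbb{F}_q^*$ of order $d:=\gcd(N,q-1)$. Under the hypothesis $d\le 2$, this subgroup is contained in $\{1,-1\}$.

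Next I would eliminate $\kappa=1$: since $\kappa$ is the cross-ratio of four pairwise distinct concurrent lines, $\kappa\neq 0,1$. Hence the only possibility is $d=2$ and $\kappa=-1$; in particular $p$ must be odd, for otherwise $-1=1$. Substituting $\kappa=-1$ into the second identity gives
\[
(-2)^{N}=1.
\]
Because $N=n(n-1)$ is a product of two consecutive integers, it is even, so $(-2)^{N}=2^{N}$, and therefore $2^{N}=1$ in $\mathbb{F}_q$.

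Finally, I would argue on the multiplicative order of $2$ in $\mathbb{F}_q^*$. From $2^{N}=1$ and $2^{q-1}=1$ one gets $\mathrm{ord}(2)\mid \gcd(N,q-1)=d\le 2$, so $2^{2}=1$ in $\mathbb{F}_q$, that is, $3=0$, forcing $p=3$. This contradicts the standing assumption $p\ne 3$, so no such $4$-net exists.

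I do not expect any serious obstacle: the proof is essentially an elementary group-theoretic extraction from Proposition \ref{pr:k_id}. The only subtle point to double-check is that the identity $(\kappa-1)^{N}=1$ is available (rather than one of the five other versions tied to a different ordering of the four lines), but this is exactly what Proposition \ref{pr:k_id} guarantees, and it is the key ingredient that converts the constraint $\kappa=-1$ into a constraint on the characteristic via $2^{N}=1$.
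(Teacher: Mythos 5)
Your proof is correct and follows essentially the same route as the paper: reduce $\kappa$ to $-1$ via $\kappa^{N}=1$ and $\gcd(N,q-1)\le 2$, then use $(\kappa-1)^{N}=1$ to force $p=3$. The paper's version is merely more terse; your spelled-out order-of-$2$ argument is exactly what it leaves implicit.
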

\begin{proof} From Equation (\ref{eqnov10}), either $\kappa=1$ and $p=2$ or $\kappa^2=1$ and $p>2$. On the other hand, $\kappa\neq 1$. Hence $\kappa=-1$ and $p>2$. Now, Equation (\ref{eq9novbis}) yields $p=3$, a contradiction.
\end{proof}
The following example shows that the hypothesis $p\neq 3$ in Theorem \ref{thenov12} is essential.
\begin{example}
\label{ex3} Let $q=3^r$, and regard $PG(2,\mathbb{F}_q)$ as the projective closure of the affine plane $AG(2,\mathbb{F}_q)$. The four line sets $\lambda_1,\lambda_2,\lambda_3,\lambda_4$ form a $4$-net of order $q$ embedded in $AG(2,\mathbb{F}_q)$ where $\lambda_1$ and $\lambda_2$  consist of all horizontal and vertical lines respectively, while $\lambda_3$ and $\lambda_4$ consist of all lines with slope $1$ or $-1$, respectively. The constant cross-ratio of this $4$-net equals $-1$.
\end{example}

\section{Nets with more than four components}
We prove the non-existence of $5$-nets embedded in $PG(2,\mathbb{K})$ over a field $\mathbb{K}$  of characteristic $0$. This result was previously proved by Stipins \cite{sj2004}; see also \cite{ys2007}. Those authors used results and techniques  from Algebraic geometry. Here, we present a simple combinatorial proof depending on Theorem \ref{thenov10ter}. Our proof also works in positive characteristic $p$ whenever $p$ is big enough compared to the order $n$ of $4$-net; for example, when $p>3^{\varphi(n^2-n)}$ so that Theorem \ref{th10nov} holds. However, the non-existence result fails in general. This will be illustrated by means of some examples.

We begin with a technical lemma.
\begin{lemma} \label{lm:k5}
Let $A,B,C,D,D'$ be collinear points in $PG(2,\mathbb{K})$ with cross-ratios $\kappa=(ABCD)$ and $1-\kappa=(ABCD')$. If (\ref{eq9nov}) holds then $(ABDD')=-\kappa.$
\end{lemma}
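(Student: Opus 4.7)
The plan is to reduce the lemma to a short algebraic identity on the line $\ell$ through the five points. First I would fix an affine coordinate on $\ell$ in which $A\mapsto 0$ and $B\mapsto\infty$, and write $c,d,d'$ for the parameters of $C,D,D'$. With this normalisation the standard cross-ratio formula collapses, for any four points having $A,B$ in the first two slots, to a simple ratio of the remaining two coordinates. The hypotheses $(ABCD)=\kappa$ and $(ABCD')=1-\kappa$ thus become two equations relating $c,d,d'$.

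Dividing these two equations eliminates $c$ and produces $d/d'=(1-\kappa)/\kappa$---and this same ratio is precisely $(ABDD')$, again because $A$ and $B$ sit at $0$ and $\infty$. At this point Equation~(\ref{eq9nov}) enters, but only as the elementary restatement $1-\kappa=-\kappa^2$; substituting gives $(ABDD')=-\kappa^2/\kappa=-\kappa$, which is the asserted identity.

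I do not foresee any real obstacle. The only bookkeeping concerns the cross-ratio convention: since there are six legitimate values associated with an ordered quadruple, one must apply the same convention consistently to all three cross-ratios appearing in the statement. However, Equation~(\ref{eq9nov}) is stable under the $S_3$-action permuting those six values, so the conclusion $(ABDD')=-\kappa$ is convention-independent, and the proof is essentially a one-line consequence of the definition of cross-ratio together with the identity $1-\kappa=-\kappa^2$.
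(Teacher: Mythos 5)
Your proof is correct and follows the same route as the paper: both normalize coordinates so that $A$ and $B$ sit at the two distinguished positions (the paper uses $A=(1,0,0)$, $B=(0,1,0)$, $C=(1,1,0)$, giving $D=(\kappa,1,0)$, $D'=(1-\kappa,1,0)$) and then conclude by the direct computation $(ABDD')=(1-\kappa)/\kappa=-\kappa^2/\kappa=-\kappa$. The paper leaves the final computation implicit; you have simply written it out, including the use of $1-\kappa=-\kappa^2$ from (\ref{eq9nov}).
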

\begin{proof}
Without loss of generality,  $A=(1,0,0)$, $B=(0,1,0)$, $C=(1,1,0)$. Then $D=(\kappa,1,0)$, $D'=(1-\kappa,1,0)$, and the result follows by a direct computation.
\end{proof}

\begin{theorem}
\label{fotetel}
 If the characteristic of the field $\mathbb{K}$ is either $0$ or greater than $3^{\varphi(n^2-n)}$, then there exists no $5$-net of order $n$ embedded in $PG(2,\mathbb{K})$.
\end{theorem}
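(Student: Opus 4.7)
The plan is to argue by contradiction, using Theorem \ref{thenov10ter} (in characteristic $0$) or Theorem \ref{th10nov} (in characteristic $p>3^{\varphi(n^2-n)}$) to force the constant cross-ratios of the various $4$-subnets to lie among the two roots of $X^2-X+1$, and then to exploit Lemma \ref{lm:k5} to derive a numerical contradiction.

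First I would observe that any four components chosen from a $5$-net $(\lambda_1,\ldots,\lambda_5)$ form a $4$-net of the same order $n$, so by Theorem \ref{thenov10ter}/\ref{th10nov} each such $4$-subnet has constant cross-ratio satisfying $X^2-X+1=0$. The two roots of this polynomial are $\kappa$ and $1-\kappa=1/\kappa$ (primitive sixth roots of unity). Note that under the hypothesis on $\mathrm{char}(\mathbb{K})$, the characteristic is different from $2$ (in Theorem \ref{th10nov} we even have $p>3$, since $n\geq 2$ gives $\varphi(n^2-n)\geq 1$).

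Next, I would fix a point $P$ of the $5$-net and let $\ell_i\in\lambda_i$ be the line through $P$. Set $\kappa=(\ell_1,\ell_2,\ell_3,\ell_4)$, and let $\kappa'$ be the constant cross-ratio of the $4$-subnet $(\lambda_1,\lambda_2,\lambda_3,\lambda_5)$; then $\kappa'\in\{\kappa,1-\kappa\}$. The case $\kappa'=\kappa$ would force $(\ell_1,\ell_2,\ell_3,\ell_5)=(\ell_1,\ell_2,\ell_3,\ell_4)$, hence $\ell_4=\ell_5$, impossible since $\lambda_4$ and $\lambda_5$ are disjoint line sets. So $\kappa'=1-\kappa$. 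Intersecting $\ell_1,\ldots,\ell_5$ with a transversal that avoids $P$ and produces five distinct collinear points $A,B,C,D,D'$ (such a line exists for generic choices), cross-ratios of concurrent lines translate to cross-ratios of these points: $(ABCD)=\kappa$ and $(ABCD')=1-\kappa$. Lemma \ref{lm:k5} then gives $(ABDD')=-\kappa$, i.e.\ $(\ell_1,\ell_2,\ell_4,\ell_5)=-\kappa$.

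Finally, since $(\lambda_1,\lambda_2,\lambda_4,\lambda_5)$ is yet another $4$-subnet, its constant cross-ratio $-\kappa$ must likewise satisfy $X^2-X+1=0$. Substituting, $(-\kappa)^2-(-\kappa)+1=\kappa^2+\kappa+1=0$, and using $\kappa^2=\kappa-1$ this reduces to $2\kappa=0$. As $\kappa$ is a primitive sixth root of unity it is nonzero, so we would need $\mathrm{char}(\mathbb{K})=2$, contradicting the hypothesis. This rules out $5$-nets; for $k\geq 6$ one simply notes that any five components of a $k$-net form a $5$-net, so no such $k$-net can exist either. The main subtlety I anticipate is bookkeeping: the constant cross-ratio of a $4$-net is only defined up to the six symmetries listed in Section $4$, so one must choose a consistent ordering of the components across the different $4$-subnets in order to legitimately compare the values $\kappa$, $1-\kappa$, and $-\kappa$; once the ordering is pinned down by the choice of the ordered tuple $(\ell_1,\ell_2,\ell_3,\ell_4)$ at $P$, the remaining identifications are forced.
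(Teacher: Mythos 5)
Your proof is correct and follows essentially the same route as the paper: comparing the constant cross-ratios of the three $4$-subnets $(\lambda_1,\lambda_2,\lambda_3,\lambda_4)$, $(\lambda_1,\lambda_2,\lambda_3,\lambda_5)$, $(\lambda_1,\lambda_2,\lambda_4,\lambda_5)$, forcing the first two to be the distinct roots $\kappa$ and $1-\kappa$ of $X^2-X+1$, and invoking Lemma \ref{lm:k5} to conclude that the third equals $-\kappa$, which cannot be a root. Your explicit check that $(-\kappa)^2+\kappa+1=2\kappa\neq 0$ requires $\mathrm{char}(\mathbb{K})\neq 2$, and the transversal reduction to collinear points, are just details the paper leaves implicit.
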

\begin{proof}
Let $\Lambda=(\lambda_1,\lambda_2,\lambda_3,\lambda_4,\lambda_5)$ be a $5$-net of order $n$ embedded in $PG(2,\mathbb{K})$. Then $\Lambda_5=(\lambda_1,\lambda_2,\lambda_3,\lambda_4)$, $\Lambda_4=(\lambda_1,\lambda_2,\lambda_3,\lambda_5)$, and $\Lambda_{45}=(\lambda_1,\lambda_2,\lambda_4,\lambda_5)$
are three different $4$-nets and so we can compare their cross-ratios, say
$$\kappa_5=(l_1,l_2,l_3,l_4),\, \kappa_4=(l_1,l_2,l_3,l_5),\, \kappa_{45}=(l_1,l_2,l_4,l_5),$$ for five lines from different components and concurrent at a point of $\Lambda$.
{}From Proposition \ref{pr:k_id} each of them is a root of the polynomial $X^2-X+1$. Since $\Lambda_5$ and $\Lambda_4$ only differ in the last component,  $\kappa_5\neq \kappa_4$. Therefore, $\kappa_4=1-\kappa_5$. From Lemma \ref{lm:k5}, $\kappa_{45}=-\kappa_5$. This shows that $\kappa_{45}$ is not a root of $X^2-X+1$ contradicting
Proposition \ref{pr:k_id}.
\end{proof}

Example \ref{ex3} can be generalized for finite fields $\mathbb{F}_q$ with $q=p^r$ showing that $k$-nets arise from affine subplanes of $PG(2,\mathbb{K})$. Such $k$-nets have order $p^h$ with $h|r$. Here, we give further $k$-nets of $p$-power order. The construction relies on an idea of G. Lunardon \cite{Lunardon}. For the sake of simplicity, we describe the construction in terms of a dual $k$-net, that is, the components are sets of points such that a line connecting two points of different components hits any third component in precisely one point.

\begin{example} \label{exam:Lun}
Let $\mathbb{K}=\mathbb{F}_q$ such that $q=r^s$ with $s\geq 3$. Take elements $u,v\in \mathbb{F}_q$ such that $1,u,v$ are linearly independent over the subfield $\mathbb{F}_r$. Take a basis $\mathbf{b}_1,\mathbf{b}_2$ of $\mathbb{F}_q^2$ and put $\mathbf{b}_0=u\mathbf{b}_1 + v\mathbf{b}_2$. For any $\alpha \in \mathbb{F}_r$, we define the points sets
\[A_\alpha=\{\alpha \mathbf{b}_0 + \lambda \mathbf{b}_1 + \mu \mathbf{b}_2 \mid \lambda, \mu \in \mathbb{F}_r \} \]
in $AG(2,q)$. Then the $A_\alpha$'s ($\alpha \in \mathbb{F}_r$) are components of a dual $r$-net of order $r^2$. In order to see this, take the points
\[P_i=\alpha_i \mathbf{b}_0 + \lambda_i \mathbf{b}_1 + \mu_i \mathbf{b}_2, \hspace{1cm} i=1,2,3.\]
$P_1,P_2,P_3$ are collinear in $AG(2,q)$ if and only if the vectors
\begin{equation} \label{eq:paral}
(\alpha_1-\alpha_2) \mathbf{b}_0 + (\lambda_1-\lambda_2) \mathbf{b}_1 + (\mu_1-\mu_2) \mathbf{b}_2 \mbox{ and }
(\alpha_1-\alpha_3) \mathbf{b}_0 + (\lambda_1-\lambda_3) \mathbf{b}_1 + (\mu_1-\mu_3) \mathbf{b}_2
\end{equation}
are linearly dependent over $\mathbb{F}_q$. By the definition of $\mathbf{b}_0$ and the independence of $\mathbf{b}_1, \mathbf{b}_2$, \eqref{eq:paral} is equivalent with
\begin{multline} \label{eq:det}
((\alpha_1-\alpha_2)u + \lambda_1-\lambda_2)((\alpha_1-\alpha_3)v + \mu_1-\mu_3) - \\
((\alpha_1-\alpha_3)u + \lambda_1-\lambda_3)((\alpha_1-\alpha_2)v + \mu_1-\mu_2) = 0.
\end{multline}
Sorting by $u$ and $v$, we obtain
\begin{eqnarray*} %\label{eq:sort}
0&=&u [(\alpha_1-\alpha_2)(\mu_1-\mu_3)-(\alpha_1-\alpha_3)(\mu_1-\mu_2)] \\
&& + v [(\alpha_1-\alpha_3)(\lambda_1-\lambda_2) - (\alpha_1-\alpha_2)(\lambda_1-\lambda_3)] \\
&& + (\lambda_1-\lambda_2)(\mu_1-\mu_3) - (\mu_1-\mu_2)(\lambda_1-\lambda_3).
\end{eqnarray*}
The independence of $1,u,v$ over $\mathbb{F}_r$ implies the system of equations
\begin{align}
0&=(\alpha_1-\alpha_2)(\mu_1-\mu_3)-(\alpha_1-\alpha_3)(\mu_1-\mu_2), \label{eq:sys1}\\
0&=(\alpha_1-\alpha_3)(\lambda_1-\lambda_2) - (\alpha_1-\alpha_2)(\lambda_1-\lambda_3), \label{eq:sys2}\\
0&=(\lambda_1-\lambda_2)(\mu_1-\mu_3) - (\mu_1-\mu_2)(\lambda_1-\lambda_3). \label{eq:sys3}
\end{align}
With given points $P_1,P_2$, $\alpha_1\neq \alpha_2$, \eqref{eq:sys1} and \eqref{eq:sys2} has the unique solution
\begin{align*}
\lambda_3 &= \frac{\lambda_1(\alpha_3-\alpha_2) + \lambda_2(\alpha_1-\alpha_3)}{\alpha_1-\alpha_2}, \\
\mu_3 &= \frac{\mu_1(\alpha_3-\alpha_2) + \mu_2(\alpha_1-\alpha_3)}{\alpha_1-\alpha_2},
\end{align*}
which is a solution for \eqref{eq:sys3}, as well. This means that the line $P_1P_2$ hits $A_{\alpha_3}$ in the unique point
\[P_3=\frac{\alpha_3-\alpha_2}{\alpha_1-\alpha_2} P_1 + \frac{\alpha_1-\alpha_3}{\alpha_1-\alpha_2} P_2.\]
This formula further shows that the constant cross-ratio can take any value in $\mathbb{F}_r\setminus \{0,1\}$.
\end{example}
We are able to describe the geometric structure of $k$-nets ($k\geq 4$) where one component is contained in a line pencil.
\begin{theorem}
Let $\lambda=(\lambda_1,\ldots,\lambda_k)$, $k\geq 4$, be a $k$-net of order $n$ embedded in $PG(2,\mathbb{K})$. Assume that the component $\lambda_1$ is contained in a line pencil. Then the following hold.
\begin{enumerate}
\item The order of $\lambda$ is $n=p^e$ where $p>0$ is the characteristic of $\mathbb{K}$.
\item For each component $\lambda_i$, $i>1$, there is an elementary Abelian $p$-group of collineations acting regularly on the lines of $\lambda_i$.
\item The components $\lambda_2,\ldots,\lambda_k$ are projectively equivalent.
\item If any other component is contained in a line pencil then all components are, and the base points of the pencils are collinear.
\end{enumerate}
\end{theorem}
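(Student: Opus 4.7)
The strategy is to construct a subgroup $T$ of projective collineations of $PG(2,\mathbb{K})$ centered at $O_1$ that preserves every component of the $k$-net, and then read off the four items from the algebraic structure of $T$. Choose projective coordinates with $O_1=(0:1:0)$, so that $\lambda_1$ consists of vertical lines $\ell_j:x=c_j$; the central collineations with center $O_1$ then form the three-parameter group
\[ H = \{\tau_{A,B,C} : (x,y) \mapsto (x, Ay + Bx + C) \mid A\in \mathbb{K}^*,\, B,C \in \mathbb{K}\}, \]
whose elation subgroup ($A=1$) is isomorphic to $(\mathbb{K}^2,+)$.

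For each pair $w,w'\in\lambda_2$, the perspectivity from $O_1$ matches the net-points on $w$ with those on $w'$, and the condition $\tau(w)=w'$ cuts $H$ down to a one-parameter family (parametrized by $A$). Imposing additionally that $\tau$ send the $\lambda_3$-line through $\ell_j\cap w$ to the $\lambda_3$-line through $\ell_j\cap w'$ for each $j$ determines the parameter $A$. The main technical obstacle is to verify that the values of $A$ computed from different indices $j$ agree; this consistency is forced by the cross-ratio constancy of the $4$-net (Proposition~\ref{pr:constant}), which rigidly relates the slopes of the $\lambda_2,\lambda_3,\lambda_4$-lines through every net-point. The same cross-ratio identity shows the resulting $\tau_{w,w'}\in H$ automatically preserves $\lambda_4$, and iterating over the sub-$4$-nets $(\lambda_1,\lambda_2,\lambda_3,\lambda_i)$ for $i>4$ extends the conclusion to every component. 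The set $T:=\{\tau_{w,w'}:w,w'\in\lambda_2\}$ is then a subgroup of $H$ acting regularly on $\lambda_2$, whence $|T|=n$; the same $T$ acts regularly on each $\lambda_i$, $i>1$, via the induced action on the net-points of any line of $\lambda_1$.

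Next I argue $T$ consists entirely of elations. Supposing $\tau\in T$ has $A\neq 1$, the axis $a$ of $\tau$ is a line of fixed points, and for every $w\in\lambda_2$ the intersection $F_w=w\cap a$ is fixed by $\tau$; the relation $F_{\tau(w)}=F_w$ forces the $\langle\tau\rangle$-orbit of $w$ to consist of lines sharing the point $F_w$. Combined with the transitivity of $T$ on $\lambda_2$ and a commutator computation in $H$, this either drives $\lambda_2$ to be a pencil (in which case item (4) below supplies the required elementary abelian group of shears with axis through the pencil center) or delivers a contradiction. Hence $T\subseteq(\mathbb{K}^2,+)$; being a finite nontrivial subgroup of a group that is torsion-free in characteristic $0$, $T$ forces $\mathrm{char}\,\mathbb{K}=p>0$, and every non-identity element of $T$ has additive order $p$. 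Thus $T$ is elementary abelian of order $n=p^e$, establishing (1) and (2).

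For item (3), fix a net-point $P_0$ and, for each $i>1$, let $w_i\in\lambda_i$ be the unique line of $\lambda_i$ through $P_0$; then $\lambda_i=T\cdot w_i$. The unique elation $\sigma\in H$ with $\sigma(w_i)=w_j$ commutes with the abelian group $T$, so $\sigma(\lambda_i)=T\cdot\sigma(w_i)=T\cdot w_j=\lambda_j$, giving the projective equivalence. For item (4), suppose after relabeling that $\lambda_2$ is a pencil through a point $O_2$ (necessarily distinct from $O_1$, else the net would have a single net-point). Since $T$ preserves $\lambda_2$ setwise and $O_2$ is the common intersection of the lines in $\lambda_2$, each $\tau\in T$ fixes $O_2$; as $\tau$ is an elation with center $O_1$, its axis must contain both $O_1$ and $O_2$, hence equals the line $a:=O_1O_2$. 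For any other component $\lambda_j$ with $j\neq 1$ and any $w_j\in\lambda_j$, every line of the orbit $T\cdot w_j=\lambda_j$ passes through the fixed point $O_j:=w_j\cap a$; so $\lambda_j$ is a pencil through $O_j\in a$, and all centers $O_1,\ldots,O_k$ lie on the line $a=O_1O_2$.
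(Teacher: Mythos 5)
Your overall strategy --- manufacturing a group of perspectivities with center $O_1$ that acts regularly on the remaining components --- is the right one and is close in spirit to the paper's, but the step on which everything else rests is not proved. Writing $m,m'$ for the slopes of $w,w'$ and $\mu_j,\mu'_j$ for the slopes of the $\lambda_3$-lines through $\ell_j\cap w$ and $\ell_j\cap w'$, the condition you impose at the $j$-th point forces $A_j=\frac{\mu'_j-m'}{\mu_j-m}$, and the whole construction of $\tau_{w,w'}$ stands or falls with the claim that $A_j$ does not depend on $j$. You attribute this to Proposition \ref{pr:constant}, but that proposition is a \emph{pointwise} statement: at a single net-point it ties together the slopes of the four lines through \emph{that} point (at $\ell_j\cap w$ it relates $m,\mu_j$ and the slope of the $\lambda_4$-line there, and similarly at $\ell_j\cap w'$). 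It links nothing at $\ell_j\cap w$ with anything at $\ell_{j'}\cap w$ for $j\neq j'$, which is exactly what the constancy of $A_j$ in $j$ requires. (What cross-ratio constancy does give you, for each fixed $j$ separately, is that the same $A_j$ works for $\lambda_4$ --- but that is the easy half.) So the existence of $\tau_{w,w'}$ is precisely the crux and is left unproved. The paper sidesteps this interpolation problem by making the cross-ratio the \emph{definition} of the collineation: working dually with $\lambda_1$ contained in a line $\ell$, it attaches to each point $S\in\lambda_2$ the $(S,\ell)$-perspectivity $u_S$ sending $P$ to the point $P'$ of $SP$ with $(S,P,P',PP'\cap\ell)=\kappa$; the constant cross-ratio then \emph{immediately} gives $u_S(\lambda_3)=\lambda_4$, and the group is generated by the compositions $u_S^{-1}u_T$, which preserve $\lambda_3$ with no consistency check needed.

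A second genuine gap is your exclusion of homologies. ``Combined with the transitivity of $T$ on $\lambda_2$ and a commutator computation in $H$, this either drives $\lambda_2$ to be a pencil (in which case item (4) below supplies the required \dots group) or delivers a contradiction'' is not an argument: the computation is not carried out, the case split is not resolved, and invoking item (4) --- a conclusion of the theorem --- to finish a case of items (1) and (2) is circular. The paper's corresponding step is a two-line fixed-point argument: if $u_S^{-1}u_T$ fixed a point $R\notin\ell$, then with $R'=u_S(R)=u_T(R)$ and $R''=RR'\cap\ell$ both $(S,R,R',R'')$ and $(T,R,R',R'')$ equal $\kappa$, forcing $S=T$; hence the generators are elations with axis $\ell$, and the group they generate is elementary abelian of exponent $p$ acting transitively, hence regularly, on $\lambda_3$. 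I suggest rebuilding your proof around such a cross-ratio-defined perspectivity (or its dual with center $O_1$); your final two paragraphs, on projective equivalence of the components and on item (4), are essentially sound once the group $T$ and its elation property are actually in hand.
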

\begin{proof}
It suffices to prove the theorem for $k=4$. We give the proof for the dual $k$-net by assuming that the component $\lambda_1$ is contained in the line $\ell$. Let $\kappa$ be the constant cross-ratio of $(\lambda_2,\lambda_3,\lambda_4,\lambda_1)$ and for any point $S\not\in \ell$ denote by $u_S$ the $(S,\ell)$-perspectivity such that for any point $P$ and its image $P'=u_S(P)$, the cross-ratio of $S,P,P'$ and $PP'\cap \ell$ is $\kappa$. Then, for any $S\in \lambda_2$, $u_S$ induces a bijection between $\lambda_3$ and $\lambda_4$. In particular, $\lambda_3$ and $\lambda_4$ are projectively equivalent. Let $S,T \in \lambda_2$, $S\neq T$, and assume that $u_S^{-1}u_T$ has a fixed point $R\not\in \ell$, that is, $u_S(R)=u_T(R)=R'$. Then, $S,T\in RR'$ and with $R''=RR'\cap \ell$ the cross-ratios $(S,R,R',R'')$, $(T,R,R',R'')$  are equal to $\kappa$. This implies $S=T$, a contradiction. This means that for all $S,T \in \lambda_2$, $S\neq T$, the collineation $u_S^{-1}u_T$ is an elation with axis $\ell$, and $\{ u_S^{-1}u_T \mid S,T \in \lambda_2\}$ generate an elementary Abelian $p$-group $U$ of collineations, leaving $\lambda_3$ invariant. Moreover, $U$ acts transitively, hence regularly on $\lambda_3$. This finishes the proof.
\end{proof}

\begin{example}
In Example \ref{exam:Lun}, we constructed a dual $r$-net of order $r^2$ in $AG(2,r^s)$, $s\geq 3$. For $P_1\in A_{\alpha_1}$, $P_2\in A_{\alpha_2}$, the line $P_1P_2$ has direction vectors
\[(u+\lambda)\mathbf{b}_1+(v+\mu)\mathbf{b}_2.\]
These are linearly independent for different choices of $\lambda,\mu \in \mathbb{F}_r$, hence they determine $r^2$ points at infinity. Let $\lambda_0$ be the set of corresponding infinite points. Then, $(\lambda_0,\lambda_1,\ldots,\lambda_r)$ is a dual $(r+1)$-net with component $\lambda_0$ contained in a line.
\end{example}

\vspace{0,5cm}\noindent {\em Authors' addresses}:

\vspace{0.2cm}\noindent G\'abor KORCHM\'AROS\\ Dipartimento di
Matematica e Informatica\\ Universit\`a della Basilicata\\ Contrada Macchia
Romana\\ 85100 Potenza (Italy)
\\E--mail: {\tt gabor.korchmaros@unibas.it }

\vspace{0.2cm}\noindent G\'abor P. NAGY\\
Bolyai Institute \\ University of Szeged \\
Aradi v\'ertan\'uk tere 1\\ 6725 Szeged (Hungary)\\
E--mail: {\tt nagyg@math.u-szeged.hu }

\vspace{0.2cm}\noindent Nicola PACE\\
Inst. de Ci\^{e}ncias Matem\'{a}ticas e de Computa\c{c}\~{a}o  \\
Universidade de S\~{a}o Paulo \\
Av. do Trabalhador S\~{a}o-Carlense, 400 \\
S\~{a}o Carlos, SP 13560-970, Brazil  \\
E--mail: {\tt nicolaonline@libero.it}
\end{document}